\numberwithin{figure}{section}
\theoremstyle{plain}
\newtheorem{thm}{Theorem}[section]
\newtheorem{lem}[thm]{Lemma}
\newtheorem{prop}[thm]{Proposition}
\newtheorem{cor}{Corollary}[thm]
\theoremstyle{definition}
\theoremstyle{remark}
\title{Isometry theorem of Cartan-Hadamard manifold}
\author[A. A. Shaikh, P. Mandal, C. K. Mondal and P. R. Ghosh]{Absos Ali Shaikh$^{*1}$, Prosenjit Mandal$^{2}$, Chandan Kumar Mondal$^{3}$ and Pinaki Ranjan Ghosh$^{4}$}
\address{\noindent\newline  Department of Mathematics,\newline University of
Burdwan, Golapbag,\newline Burdwan-713104,\newline West Bengal, India}
\email{$^1$aask2003@yahoo.co.in, aashaikh@math.buruniv.ac.in}
\email{$^2$prosenjitmandal235@gmail.com}
\email{$^3$chan.alge@gmail.com}
\email{$^4$mailtopinaki94@gmail.com}
\begin{document}

\begin{abstract}
Cartan-Hadamard manifold is a simply connected Riemannian manifold with non-positive sectional curvature. In this article, we have proved that a Cartan-Hadamard manifold satisfying steady gradient Ricci soliton with the integral condition of potential function is isometric to the Euclidean space. Next we have proved a compactness theorem for gradient shrinking Ricci soliton satisfying some scalar curvature condition. Finally, we have showed that a gradient expanding Ricci soliton with linear volume growth and positive potential function is an Einstein manifold.
\end{abstract}
\noindent\footnotetext{
$\mathbf{2020}$\hspace{5pt}Mathematics\; Subject\; Classification: 53C20; 53C21.\\ 
{Key words and phrases: Cartan-Hadamard manifold; gradient Ricci soliton; isometry; Einstein manifold.} }
\maketitle
\section*{Introduction}
 The notion of Ricci soltion is developed when Hamiltion \cite{HA82} introduced the concept of Ricci flow. A complete Riemannian manifold $(M,g)$ of dimension $n\geq 2$, with Riemannian metric $g$ is called a Ricci soliton if it admits a smooth vector field $X$ satisfying the following equation:
\begin{equation}\label{r7}
Ric+\frac{1}{2}\pounds_Xg=\lambda g,
\end{equation}
where $\lambda$ is a constant and $\pounds$ denotes the Lie derivative. The smooth vector field $X$ is called potential vector field.  If the vector field $X$ is zero or Killing, then the Ricci soliton becomes Einstein. Throughout the paper by $M$ we mean an $n$-dimensional, $n\geq 2$, complete Riemannian manifold endowed with Riemannian metric $g$. If $X=\nabla f$, where $\nabla$ is the gradient operator and $f\in C^\infty(M)$, the ring of smooth functions in $M$, then the Ricci soliton is called gradient Ricci soliton and (\ref{r7}) reduces to the form
\begin{equation}\label{r1}
\nabla^2f+Ric=\lambda g,
\end{equation}
where $\nabla^2f$ is the Hessian of $f$ and the function $f$ is called potential function. The Ricci soliton $(M,g)$ is called shrinking, steady and expanding according as $\lambda>0$, $\lambda=0$ and $\lambda<0$, respectively. By scaling the metric we can take $\lambda=-\frac{1}{2},0$ or $\frac{1}{2}$ for expanding, steady and shrinking respectively. By adding some constant to $f$, gradient Ricci soliton implies \cite{HA82}
\begin{equation}\label{eq4}
R+|\nabla f|^2=\lambda f.
\end{equation}
 Recently, the study of Ricci solitons is an important theme of research to mathematicians as well as physicists. Each type of Ricci soliton has a significant impact on the topology of the manifold. Munteanu and Wang \cite{MW17} proved a compactness theorem for the $n$-dimensional gradient shrinking Ricci soliton with non-negative sectional curvature and positive Ricci curvature.  Perelman \cite{PE03} proved that a compact Ricci soliton is always gradient Ricci soliton. Fern\'andez-L\'opez and Garc\'ia-R\'io \cite{FG08} proved that  shrinking Ricci soliton with bounded vector field must be compact. Li and Zhou \cite{LZ19} proved that compact Ricci soliton with vanishing Weyl conformal curvature tensor must be Einstein.\\
\par Cartan-Hadamard theorem states that the universal cover of an $n$-dimensional complete Riemannian manifold with non-positive curvature is diffeomorphic to the $n$-dimensional Euclidean space $\mathbb{R}^n$. In particular, if the manifold is simply connected then itself is diffeomorphic to $\mathbb{R}^n$. More precisely, at any point $p\in M$, the exponential mapping  $exp_p : T_pM\rightarrow$ M is the diffeomorphism, where $T_pM$ is the tangent space at $p$.  From this theorem K. Shiga \cite{SH84} developed the notion of Cartan-Hadamard manifold. Cartan-Hadamard manifold, in general, may not be isometric to the Euclidean space. Therefore, it is quite natural to ask under which condition Cartan-Hadamard manifold is isometric to the Euclidean space. In the first section we have proved that Cartan-Hadamard manifold satisfying steady gradient Ricci soliton with potential function possessing some integral condition is isometric to the Euclidean space. In the second section, we have showed that gradient shrinking Ricci soliton with bounded potential function and quadratic volume satisfying some scalar curvature condition is compact. In the last section, we have derived a condition for which an expanding gradient Ricci soliton becomes an Einstein manifold.
\section{Steady Ricci solition in Cartan-Hadamard manifold}

\begin{thm}
Let $(M,g)$ be a Cartan-Hadamard manifold satisfying gradient steady Ricci soliton with the potential function $f\leq 0$ satisfying the condition
\begin{equation}\label{eq6}
 -\int_{M-B(p,r)}d(x,p)^{-2}f<\infty,
 \end{equation}
 where $B(p,r)$ is the ball with center at $p$ and radius $r>0$. Then $M$ is isometric to $\mathbb{R}^n$.
\end{thm}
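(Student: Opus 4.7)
The plan is to show that the potential function must be constant; the soliton equation then yields Ricci-flatness, and together with non-positive sectional curvature this forces the metric to be flat.

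\textbf{Step 1: convexity of $f$.} Setting $\lambda=0$ in \eqref{r1} and \eqref{eq4} gives
\begin{equation*}
\nabla^2 f = -Ric, \qquad R+|\nabla f|^2=0, \qquad \Delta f=|\nabla f|^2\ge 0.
\end{equation*}
On a Cartan--Hadamard manifold every sectional curvature is non-positive, so in an orthonormal frame $Ric(X,X)=\sum_i K(X,e_i)\le 0$; hence $\nabla^2 f=-Ric\ge 0$, i.e.\ $f$ is geodesically convex.

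\textbf{Step 2: $f$ is constant and $M$ is Ricci-flat.} By the Cartan--Hadamard theorem $\exp_p$ is a global diffeomorphism, so any two points of $M$ lie on a complete geodesic line $\gamma\colon\mathbb R\to M$. The pullback $f\circ\gamma$ is convex on $\mathbb R$ and bounded above by $0$; any such function is necessarily constant, because otherwise the secant inequality forces it to tend to $+\infty$ on at least one side. Hence $f$ is constant on $M$, which gives $\nabla f\equiv 0$, then $R\equiv 0$, and finally $Ric\equiv 0$ from \eqref{r1}. In an orthonormal basis $0=R=2\sum_{i<j}K(e_i,e_j)$ with each summand $\le 0$, so every sectional curvature vanishes; $M$ is then complete, simply connected, and flat, hence isometric to $\mathbb R^n$.

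\textbf{Main obstacle.} The striking feature of this outline is that the integral condition \eqref{eq6} is never used, suggesting either that the hypothesis is redundant or that the authors have a less structural proof in mind. The natural alternative is to multiply $\Delta f=|\nabla f|^2$ by a distance cutoff $\phi_R^2$ supported in $B(p,2R)$, integrate by parts to obtain $\int\phi_R^2|\nabla f|^2\le 4\int|\nabla\phi_R|^2$, and then apply a weighted Cauchy--Schwarz matched to the weight $(-f)d(x,p)^{-2}$ so that \eqref{eq6} sends the right-hand side to $0$ as $R\to\infty$. The technical work in that variant is designing the cutoff so the weights line up with \eqref{eq6}; the convexity argument above sidesteps this bookkeeping entirely, which is why I would prefer it.
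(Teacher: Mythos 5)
Your argument is correct, and it is genuinely different from the paper's. The paper first proves a lemma: using Chen's result that $R\ge 0$ on complete steady solitons, the Cheeger--Colding cutoff $\varphi_r$, integration by parts, and precisely the hypothesis \eqref{eq6} to kill the boundary term, it concludes $R\equiv 0$; it then feeds $R\equiv 0$ into the identity $R+|\nabla f|^2=A$ to get $|\nabla f|$ constant and $\Delta f=0$, and finishes with the Bochner formula, $0=|\nabla^2 f|^2+Ric(\nabla f,\nabla f)$, together with non-positive sectional curvature to force flatness. You instead exploit the Cartan--Hadamard hypothesis at the start rather than the end: $Ric\le 0$ makes $\nabla^2 f=-Ric\ge 0$, so $f$ is convex along every complete geodesic, and a convex function on $\mathbb{R}$ that is bounded above (here by $0$) is constant; hence $f$ is constant, $Ric\equiv 0$, and Ricci-flatness plus non-positive sectional curvature gives flatness, so $M\cong\mathbb{R}^n$. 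Your route is more elementary (no cutoff argument, no Chen's theorem, no Bochner formula) and, as you observe, it shows the integral hypothesis \eqref{eq6} is redundant once $f\le 0$ is assumed on a Cartan--Hadamard manifold; the paper's cutoff argument is the one that actually needs \eqref{eq6}, and its payoff is the standalone Lemma (vanishing of $R$) which holds without any curvature sign assumption and is reused in their Theorem 1.5.

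One small correction to your Step 1: for a \emph{steady} soliton the normalization \eqref{eq4} cannot be achieved by adding a constant to $f$ (adding a constant does nothing when $\lambda=0$), so the identities $R+|\nabla f|^2=0$ and $\Delta f=|\nabla f|^2$ are unjustified; the correct statement is $R+|\nabla f|^2=\mathrm{const}$ (and in fact $\Delta f=-R\le 0$ by Chen's result, the opposite sign of what you wrote). This does not damage your proof, since Step 2 uses only the convexity $\nabla^2 f=-Ric\ge 0$ and $f\le 0$, but you should delete those two displayed identities or replace them with $R+|\nabla f|^2=A$.
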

To prove the Theorem $1.1$, we need the following Lemma which will state and prove at first.
\begin{lem}\label{lem1}
Let $(M,g)$ be a complete gradient steady Ricci soliton with the potential function $f\leq 0$ satisfying (\ref{eq6}).
 Then the scalar curvature vanishes in $M$.
\end{lem}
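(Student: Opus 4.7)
The plan is to collapse the two steady-soliton identities into an elliptic equation for $f$ alone, and then use the integral hypothesis (\ref{eq6}) through a weighted cutoff argument.

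Tracing the steady soliton equation $\mathrm{Ric} + \nabla^{2} f = 0$ gives $\Delta f + R = 0$, while identity (\ref{eq4}) with $\lambda = 0$ reads $R + |\nabla f|^{2} = 0$. Eliminating $R$ between these two produces
\begin{equation*}
\Delta f \;=\; |\nabla f|^{2} \;\geq\; 0,
\end{equation*}
so $f$ is subharmonic (equivalently, $u := e^{-f}$ is a harmonic function with $u \geq 1$). Since $R = -|\nabla f|^{2}$, proving $R \equiv 0$ is the same as proving $|\nabla f| \equiv 0$, i.e.\ that $f$ is constant on $M$.

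For this I would run a weighted cutoff argument. Let $\phi_{r} \in C_{c}^{\infty}(M)$ be a standard cutoff with $\phi_{r} \equiv 1$ on $B(p,r)$, $\mathrm{supp}(\phi_{r}) \subset B(p, 2r)$, and $|\nabla \phi_{r}| \leq C/r$. Multiplying the equation $\Delta f = |\nabla f|^{2}$ by the non-negative function $(-f)\phi_{r}^{2}$ and integrating by parts, then applying Cauchy--Schwarz together with Young's inequality, yields an estimate of the form
\begin{equation*}
\int_{M} (-f)\phi_{r}^{2} |\nabla f|^{2} \;\lesssim\; \int_{M} (-f)|\nabla \phi_{r}|^{2} \;+\; (\text{lower order terms}).
\end{equation*}
The essential observation is that $\mathrm{supp}(\nabla \phi_{r}) \subset B(p, 2r) \setminus B(p, r)$, where $\rho(x) \asymp r$ and hence $|\nabla \phi_{r}|^{2} \leq C r^{-2} \leq C' \rho(x)^{-2}$; consequently the main error is majorised by the tail of the hypothesis,
\begin{equation*}
\int_{M} (-f)|\nabla \phi_{r}|^{2} \;\lesssim\; \int_{B(p,2r) \setminus B(p,r)} \frac{-f}{\rho^{2}} \;\longrightarrow\; 0 \quad \text{as } r \to \infty.
\end{equation*}
Letting $r \to \infty$ forces $(-f)|\nabla f|^{2} \equiv 0$ on $M$, and since $f$ is continuous and $M$ is connected, this makes $f$ constant; hence $R = -|\nabla f|^{2} \equiv 0$.

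The principal obstacle I anticipate lies in controlling the ``lower order terms'' produced by the integration by parts. A direct AM--GM leaves behind an unweighted residual of the form $\int |\nabla \phi_{r}|^{2} \lesssim \mathrm{Vol}(B(p,2r)\setminus B(p,r))/r^{2}$ whose decay is not supplied by the hypotheses alone. My strategy is either to absorb this piece into the left-hand side by tuning the Young parameter $\varepsilon$ and using the auxiliary estimate $\int \phi_{r}^{2}|\nabla f|^{2} \leq 4 \int |\nabla \phi_{r}|^{2}$ (obtained from the same procedure with unweighted test function $\phi_{r}$), or to refine the test function by inserting a $\sqrt{-f}$ factor so that every residual term automatically inherits the integrable weight $(-f)/\rho^{2}$ furnished by (\ref{eq6}).
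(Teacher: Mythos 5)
Your opening step is already incorrect, and it undermines the whole plan. For a steady soliton the Hamilton identity (\ref{eq4}) does \emph{not} read $R+|\nabla f|^2=0$: when $\lambda=0$, adding a constant to $f$ changes neither $R$ nor $|\nabla f|^2$, so the additive constant cannot be normalized away, and the correct statement is $R+|\nabla f|^2=A$ for some constant $A\geq 0$ (this is precisely equation (\ref{eq5}) in the paper); on the cigar or Bryant solitons $A>0$. Hence the eliminated equation is $\Delta f=|\nabla f|^2-A$, not $\Delta f=|\nabla f|^2$; the function $e^{-f}$ is not harmonic (it satisfies $\Delta e^{-f}=Ae^{-f}$); and ``$R\equiv 0$ is the same as $f$ constant'' is false --- $R\equiv 0$ is equivalent to $|\nabla f|^2\equiv A$, which is exactly the (possibly nonconstant) situation the subsequent theorem exploits. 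Worse, your identity $R=-|\nabla f|^2$ would force $R\leq 0$, contradicting Chen's theorem that complete steady gradient solitons have $R\geq 0$ --- the very fact the correct proof relies on --- unless both sides vanish identically, so taking it as an input is circular.

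Even granting your equation, you concede the decisive difficulty yourself: the unweighted residual $\int|\nabla\phi_r|^2\lesssim r^{-2}\,\mathrm{Vol}(B(p,2r))$ has no decay under the stated hypotheses, and neither proposed repair closes it (the auxiliary bound $\int\phi_r^2|\nabla f|^2\leq 4\int|\nabla\phi_r|^2$ reproduces the same uncontrolled volume term). The gap disappears if you never multiply by $f$ at all, which is the paper's route: tracing gives $R=\Delta u$ with $u=-f\geq 0$; choose a Cheeger--Colding cutoff $\varphi_r$ with the second-order bound $\Delta\varphi_r\leq C/r^2$ and integrate by parts twice, so that $\int\varphi_r R=\int(\Delta\varphi_r)\,u$ is supported in the annulus $B(p,2r)\setminus B(p,r)$, where $1/r^2\leq 4/d(x,p)^2$; hypothesis (\ref{eq6}) then makes this tail tend to $0$ as $r\to\infty$, and since $R\geq 0$ and $\varphi_r=1$ on $B(p,r)$, it follows that $\int_M R=0$ and hence $R\equiv 0$. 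No volume growth assumption is needed, because the weight $d(x,p)^{-2}$ demanded by (\ref{eq6}) is exactly what the annulus estimate produces.
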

\begin{proof}
For steady Ricci soliton, from the canonical form of Ricci soliton we get
\begin{equation}\label{eq1}
R+\Delta f=0, 
\end{equation}
\begin{equation}\label{eq5}
R+|\nabla f|^2=A,
\end{equation}
for some constant $A\geq 0$. Steady gradient Ricci soliton imply that $R\geq 0$, see \cite{CH09}. Let $u=-f$ and then from (\ref{eq1}), we obtain 
\begin{equation}\label{eq2}
R= \Delta u.
\end{equation}
 Now, we consider the cut-off function, introduced in \cite{CC96}, $\varphi_r\in C^\infty_0(B(p,2r))$ for $r>0$ such that
\[ \begin{cases} 
	  0\leq \varphi_r\leq 1 &\text{ in }B(p,2r)\\
      \varphi_r=1  & \text{ in }B(p,r) \\
      |\nabla \varphi_r|^2\leq\frac{C}{r^2}& \text{ in }B(p,2r) \\
      \Delta \varphi_r\leq \frac{C}{r^2} &  \text{ in }B(p,2r),
   \end{cases}
\]
where $C>0$ is a constant.
Then for $r\rightarrow\infty$, we have $\Delta \varphi_r\rightarrow 0$ as $\Delta \varphi_r\leq \frac{C}{r^2}$.
Now using integration by parts, we have 
\begin{eqnarray}\label{eq3}
 \nonumber\int_{B(p,2r)} \phi_r \Delta u =  \int_{B(p,2r)} \Delta \phi_ru&=&\int_{B(p,2r)-B(p,r)}\Delta \phi_ru\\
&\leq & C\int_{B(p,2r)-B(p,r)}\frac{1}{r^2}u.
\end{eqnarray}
In the last inequality, we have used the property of $\varphi_r$.
Thus (\ref{eq2}), (\ref{eq3}) and our assumption together imply that
\begin{equation}
 \int_{B(p,2r)}\varphi_rR=\int_{B(p,2r)} \phi_r \Delta u\leq C\int_{B(p,2r)-B(p,r)}\frac{1}{r^2}u\rightarrow 0,
\end{equation}
as $r\rightarrow\infty$.
But $0\leq \varphi_r\leq 1$ in $B(p,2r)$ and $R\leq 0$, hence it follows that for all $r>0$
$$\int_{B(p,2r)}\varphi_rR\leq 0.$$
Therefore, we get
$$\int_{B(p,2r)}\varphi_rR= 0\quad \text{ for all }r>0.$$
Hence, $R$ vanishes in $M$. 
\end{proof}

\begin{proof}[\textbf{Proof of Theorem 1.1.}]
From the Lemma \ref{lem1} we see that $R=0$ in $M$. Therefore, (\ref{eq5}) implies that $|\nabla f|^2=A$. Also from (\ref{eq1}), we get $\Delta f=0$. The Bochner formula \cite{AU13} for the Riemannian manifold is 
$$\frac{1}{2}\Delta|\nabla f|^2=|\nabla^2 f|^2+g(\nabla f,\nabla\Delta f)+Ric(\nabla f,\nabla f).$$
Now placing $\Delta f=0$, the above equation reduces to
\begin{equation}\label{r1.1}
\frac{1}{2}\Delta|\nabla f|^2=|\nabla^2 f|^2+Ric(\nabla f,\nabla f).
\end{equation}
Again, $\Delta|\nabla f|^2=0$. It follows from the above equation that $Ric(\nabla f,\nabla f)=0$. Since, $M$ has non-positive sectional curvature $K$ and 
$$Ric_p(\nabla f,\nabla f)=\sum_{i=1}^{n-1}K_p(\nabla f,e_i),$$
where $\{\nabla f,e_1,\cdots,e_{n-1}\}$ is the orthonormal basis for $T_pM$, the sectional curvature vanishes everywhere. Therefore, we conclude that $M$ is isometric to $\mathbb{R}^n$.
\end{proof}
\begin{lem}\cite{SA96}\label{lm1}
Let $f$ be a smooth function in a complete Riemannian manifold with non-negative Ricci curvature. If $|\nabla f|$ is constant, then $f$ is an affine function.
\end{lem}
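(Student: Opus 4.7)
My plan is to derive a Riccati-type differential inequality for $\Delta f$ along the gradient flow of $f$, use completeness to force $\Delta f \equiv 0$, and then substitute back into the Bochner formula to conclude $\nabla^2 f = 0$. Write $c = |\nabla f|$ for the constant; if $c = 0$ then $f$ is constant and hence trivially affine, so I assume $c > 0$ throughout. Differentiating $|\nabla f|^2 = c^2$ in an arbitrary direction $Y$ gives $g(\nabla_Y\nabla f,\nabla f) = 0$, and by symmetry of the Hessian this is equivalent to $\nabla^2 f(\nabla f,\cdot) = 0$; in particular $\nabla_{\nabla f}\nabla f = 0$, so integral curves of $\nabla f$ are reparametrized geodesics. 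Since $|\nabla f|$ is bounded and $M$ is complete, every such integral curve $\gamma$ is defined for all $t \in \mathbb{R}$. Moreover, the condition $\nabla^2 f(\nabla f,\cdot) = 0$ means $\nabla f$ is a null eigenvector of the Hessian; working in an orthonormal frame with first vector $\nabla f/c$, the Hessian has vanishing first row and column, so Cauchy--Schwarz applied to the remaining $(n-1)\times(n-1)$ block yields the pointwise bound $(\Delta f)^2 \leq (n-1)|\nabla^2 f|^2$.

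Next, I apply the Bochner formula to $f$: because $|\nabla f|^2$ is constant, $\Delta|\nabla f|^2 = 0$, leaving
\begin{equation*}
|\nabla^2 f|^2 + g(\nabla\Delta f,\nabla f) + Ric(\nabla f,\nabla f) = 0.
\end{equation*}
Setting $H(t) := \Delta f(\gamma(t))$, so that $H'(t) = g(\nabla\Delta f,\nabla f)$, and using $Ric \geq 0$ together with the trace inequality above, I obtain the Riccati-type inequality $H'(t) \leq -H(t)^2/(n-1)$ valid for all $t\in\mathbb{R}$.

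The crucial step, which I expect to be the main obstacle, is to show that this inequality for a smooth $H$ defined on the entire real line forces $H \equiv 0$. If $H(t_0) > 0$ at some point, then $w := 1/H$ satisfies $w'(t) \geq 1/(n-1)$ as long as $H$ remains positive; going backward in time, $w$ must therefore be driven below zero by time $t_0 - (n-1)/H(t_0)$, forcing $H$ to blow up to $+\infty$ in finite backward time and contradicting the smoothness of $H$ on the complete trajectory. A symmetric argument (applied forward in time) rules out $H(t_0) < 0$. Thus $\Delta f \equiv 0$ along $\gamma$, and since $\nabla f$ is nowhere vanishing every point of $M$ lies on some integral curve, so $\Delta f \equiv 0$ on $M$. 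Feeding this back into the Bochner formula leaves $|\nabla^2 f|^2 + Ric(\nabla f,\nabla f) = 0$; both summands are nonnegative, so each vanishes, and in particular $\nabla^2 f = 0$, i.e.\ $f$ is affine.
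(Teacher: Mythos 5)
Your argument is correct, and since the paper does not prove this lemma at all --- it is quoted from Sakai \cite{SA96} and used as a black box --- the only comparison is with the cited source rather than with anything in the text. Your route: constancy of $|\nabla f|^2$ gives $\nabla^2 f(\nabla f,\cdot)=0$, so the gradient flow lines are geodesics, complete because $\nabla f$ is bounded and $M$ is complete; the null direction $\nabla f$ improves the trace inequality to $(\Delta f)^2\le (n-1)|\nabla^2 f|^2$; the Bochner formula with $\Delta|\nabla f|^2=0$ and $Ric\ge 0$ then yields the Riccati inequality $H'\le -H^2/(n-1)$ for $H=\Delta f$ along each flow line, and your backward/forward blow-up argument correctly shows that a smooth solution of this inequality on all of $\mathbb{R}$ must vanish identically (note $H$ is non-increasing, so it stays positive backward, resp.\ negative forward, which is what makes $w=1/H$ well defined where you use it); feeding $\Delta f\equiv 0$ back into Bochner kills $|\nabla^2 f|^2+Ric(\nabla f,\nabla f)$ termwise, so $\nabla^2 f\equiv 0$, i.e.\ $f$ is affine. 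Every step checks out, including the dismissal of the trivial case $|\nabla f|=0$. The standard proof of Sakai's theorem (and, in essence, the argument of \cite{SA96}) is different in spirit: one shows the flow lines are straight lines, using that $f$ is Lipschitz with constant $|\nabla f|$ while growing along the flow at exactly that rate per unit length, and then invokes the Cheeger--Gromoll splitting theorem and Busemann-function arguments; that route produces the isometric splitting $M\cong N\times\mathbb{R}$ at the same time, which the present paper instead recovers from affinity of $f$ via Innami's theorem (Theorem \ref{th5}). Your proof buys a self-contained, more elementary argument resting only on the Bochner identity and an ODE comparison, at the cost of not giving the splitting directly --- but for the lemma as stated that is all that is required.
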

\begin{thm}\cite[Theorem 1]{IN82}\label{th5}
If a complete Riemannian manifold $(M,g)$ admits a non-constant smooth affine function, then $M$ is isometric to $N\times\mathbb{R}$, for a totally geodesic submanifold $N$ of $M$.
\end{thm}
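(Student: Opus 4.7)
The plan is to exploit the fact that an affine function has identically vanishing Hessian, so that $\nabla f$ becomes a parallel vector field whose flow supplies an explicit product splitting of $M$. Since $f$ is affine, $f\circ\gamma$ is affine in arc-length for every geodesic $\gamma$; differentiating twice forces $\nabla^2 f\equiv 0$, equivalently $\nabla(\nabla f)=0$. Consequently $|\nabla f|$ is constant, and as $f$ is non-constant this constant is positive, so after rescaling I may assume $|\nabla f|\equiv 1$. In particular $\nabla f$ is nowhere zero, and every level set $N_c:=f^{-1}(c)$ is a smooth embedded hypersurface; I fix $N:=N_0$.

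The next step is to collect the structural consequences of the parallelism. Since $\nabla f$ is a parallel unit normal along $N$, the Weingarten operator $X\mapsto -\nabla_X\nabla f$ vanishes on $TN$, so $N$ is totally geodesic. Both the line distribution $\mathbb{R}\nabla f$ and its orthogonal complement $(\nabla f)^\perp$ are parallel, and the integral curves of $\nabla f$ are unit-speed geodesics of $M$, defined for all $t\in\mathbb{R}$ by completeness. Let $\phi_t$ denote the one-parameter flow; because its generator is parallel and hence Killing, $\phi_t$ acts by isometries, and the identity $f\circ\phi_t=f+t$ follows by integrating $\tfrac{d}{dt}(f\circ\phi_t)=|\nabla f|^2=1$.

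With these tools in place I would introduce the candidate isometry
\begin{equation*}
\Phi\colon N\times\mathbb{R}\to M,\qquad \Phi(x,t)=\phi_t(x).
\end{equation*}
Surjectivity is immediate, since any $p\in M$ satisfies $p=\Phi(\phi_{-f(p)}(p),f(p))$ with $\phi_{-f(p)}(p)\in N$. Injectivity follows because $t$ is recovered as $f(\Phi(x,t))$ and each $\phi_t$ is a diffeomorphism. To check that $\Phi^*g$ equals the product metric $g_N\oplus dt^2$, I observe that under $d\Phi$ the vector $\partial_t$ at $(x,t)$ maps to $\nabla f$ at $\phi_t(x)$, a unit vector orthogonal to $T_{\phi_t(x)}N_t$, while any $v\in T_xN$ maps to $d\phi_t(v)\in T_{\phi_t(x)}N_t$; since $\phi_t$ is an isometry, the two pieces reassemble into exactly the product metric.

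The principal obstacle, as I see it, is not any single technical step but rather the global passage from the infinitesimal parallel splitting $TM=\mathbb{R}\nabla f\oplus(\nabla f)^\perp$ to a genuine global isometric product. This is where completeness of $M$ (needed so that the flow of $\nabla f$ is defined for all time and sweeps out each geodesic line) must be combined with the simultaneous integrability of both parallel distributions (needed to identify $(\nabla f)^\perp$ pointwise with the tangent space of a single level hypersurface); losing either ingredient would allow the flow to stop short or the orthogonal hypersurface structure to drift, blocking $\Phi$ from being a global isometry.
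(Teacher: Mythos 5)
Your argument is correct, but note that the paper does not prove this statement at all: it is imported verbatim as Theorem 1 of Innami \cite{IN82}, so there is no internal proof to compare against. What you have written is the standard self-contained proof of the smooth case: affinity along geodesics forces $\nabla^2 f\equiv 0$, so $\nabla f$ is a nowhere-vanishing parallel (hence Killing, with geodesic integral curves) field after normalizing $|\nabla f|\equiv 1$, and the flow map $\Phi(x,t)=\phi_t(x)$ on $N\times\mathbb{R}$ with $N=f^{-1}(0)$ is a bijective isometry onto $M$, the identity $f\circ\phi_t=f+t$ giving injectivity, surjectivity and the orthogonality of the two factors, while completeness of $M$ guarantees the flow exists for all time because its integral curves are geodesics. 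The worry you raise at the end about integrability of $(\nabla f)^\perp$ is already answered by your own construction, since $(\nabla f)^\perp=\ker df$ is integrated by the level hypersurfaces, which are totally geodesic exactly because the shape operator $X\mapsto -\nabla_X\nabla f$ vanishes. The difference from the cited source is one of generality: Innami's Theorem 1 assumes only a (continuous) affine function with no smoothness hypothesis, and his proof is correspondingly more synthetic, exploiting convexity of the level sets rather than the Hessian; your parallel-gradient argument is more elementary and entirely sufficient for the smooth statement as used in this paper, where the affine function supplied by Sakai's lemma is the smooth potential function of the soliton.
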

\begin{thm}
Let $(M,g)$ be a complete gradient steady Ricci soliton with non-negative Ricci curvature and the potential function $f\leq 0$ satisfying (\ref{eq6}). Then $M$ is isometric to $N\times\mathbb{R}$, for a totally geodesic submanifold $N$ of $M$.
\end{thm}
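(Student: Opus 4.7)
The plan is to chain the three auxiliary results stated just before the theorem: Lemma~\ref{lem1} (which produces the vanishing of the scalar curvature), Lemma~\ref{lm1} of Sakai (which promotes a constant gradient norm into affineness of $f$), and Theorem~\ref{th5} of Innami (which converts a non-constant affine function on a complete manifold into an isometric splitting). None of these steps individually requires any new computation; the art lies in checking that their hypotheses are satisfied in sequence.

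More concretely, I first observe that the hypotheses of the present theorem already contain exactly what Lemma~\ref{lem1} needs: completeness, a steady gradient Ricci soliton structure, $f\leq 0$, and the integral decay \eqref{eq6}. Applying it yields $R\equiv 0$ on $M$. Substituting this into the conservation identity \eqref{eq5} gives $|\nabla f|^{2}\equiv A$ for some constant $A\geq 0$, so $|\nabla f|$ is constant throughout $M$. Since the Ricci curvature is non-negative by assumption, Lemma~\ref{lm1} then implies that $f$ is an affine function on $M$. Provided $f$ is non-constant, Theorem~\ref{th5} immediately delivers the desired conclusion: $M$ is isometric to $N\times\mathbb{R}$ for a totally geodesic submanifold $N$.

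The step I expect to demand the most care, and therefore the main obstacle, is verifying that $f$ is non-constant, equivalently that the constant $A$ is strictly positive. If $A=0$ then $f$ is constant, the soliton equation collapses to $Ric\equiv 0$, and Theorem~\ref{th5} does not apply. I would address this either by extracting a non-triviality assumption on the soliton (so that $A>0$ may be assumed from the outset) or by treating the Ricci-flat case $A=0$ separately, where the statement either holds vacuously or admits an independent argument via a de~Rham-type decomposition. There is a tension worth flagging: an affine function on a complete manifold that is bounded above must be constant along every geodesic, so the sign constraint $f\leq 0$ pushes toward $A=0$ unless additional room is available. Assuming the non-degenerate regime, the rest of the proof is a direct sequential application of the three preliminary results with no further computation.
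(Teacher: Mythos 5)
Your chain of reductions is exactly the paper's proof: Lemma \ref{lem1} gives $R\equiv 0$, equation (\ref{eq5}) then gives $|\nabla f|^2=A$ constant, Sakai's Lemma \ref{lm1} upgrades this to $f$ affine using $Ric\geq 0$, and Innami's Theorem \ref{th5} produces the splitting. The obstacle you flag is genuine and is precisely the point the paper glosses over: Theorem \ref{th5} requires a \emph{non-constant} affine function, whereas an affine function bounded above by $0$ on a complete manifold must be constant along every geodesic (if $A>0$, running the geodesic in the direction of $\nabla f$ makes $f$ grow linearly, contradicting $f\leq 0$), so the hypotheses actually force $A=0$, $f$ constant and $Ric\equiv 0$, in which case the cited splitting theorem does not apply; thus your instinct to isolate and treat the degenerate case separately points to a gap in the paper's own argument rather than a defect in your proposal.
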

\begin{proof}
From Lemma \ref{lem1} we get that the scalar curvature vanishes in $M$. Thus (\ref{eq5}) implies that $|\nabla f|^2=A$ in $M$. Since $M$ has non-negative Ricci curvature, It follows from Lemma \ref{lm1} that $f$ is a affine function. Therefore, from Theorem \ref{th5}, we conclude the result.
\end{proof}
\section{Compactness theorem for shrinking Ricci soliton}
\begin{thm}
Let $(M, g)$ be an n-dimensional complete gradient shrinking Ricci soliton with the bounded potential function $f$ and non-negative sectional curvature. If $M$ has quadratic volume growth and there are constants $r_0>0$ and $C_1>0$ such that the scalar curvature $R\leq \frac{C_1}{r^2}$ for all $r\geq r_0$, then $M$ is compact.
\end{thm}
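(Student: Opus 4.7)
The plan is to argue by contradiction: assume $M$ is non-compact, derive that the volume of geodesic balls grows at most logarithmically in the radius, and contradict the linear lower bound guaranteed by Yau's theorem for complete non-compact manifolds with non-negative Ricci curvature (which applies here since non-negative sectional curvature implies non-negative Ricci).

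The starting point is the trace of the soliton equation $\nabla^2 f + Ric = \frac{1}{2}g$ (after the usual normalization $\lambda=\tfrac{1}{2}$):
$$\Delta f = \frac{n}{2} - R.$$
I would reuse the Cheeger--Colding cut-off $\varphi_r$ from Lemma \ref{lem1}. Two applications of integration by parts give $\int_M \varphi_r \Delta f = \int_M (\Delta \varphi_r) f$, and substituting the trace identity yields
$$\frac{n}{2}\int_M \varphi_r = \int_M (\Delta \varphi_r)\, f + \int_M \varphi_r R.$$
Since $\varphi_r\equiv 1$ on $B(p,r)$, the left-hand side is at least $\tfrac{n}{2}\,\mathrm{Vol}(B(p,r))$.

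The first term on the right is controlled by the boundedness of $f$, the bound $|\Delta \varphi_r|\leq C/r^2$, and quadratic volume growth: it is bounded in absolute value by $\tfrac{C\|f\|_\infty}{r^2}\mathrm{Vol}(B(p,2r))$, which is uniformly bounded in $r$. For the second term, I split $B(p,2r)=B(p,r_0)\cup (B(p,2r)\setminus B(p,r_0))$; the integral over $B(p,r_0)$ is a fixed finite constant. On the outer region, the decay $R(x)\leq C_1/d(x,p)^2$ combined with a dyadic annular decomposition gives contributions that telescope logarithmically: on each annulus $A_k=B(p,2^{k+1}r_0)\setminus B(p,2^k r_0)$ the integrand is at most $C_1/(2^k r_0)^2$ while quadratic volume growth gives $\mathrm{Vol}(A_k)\leq C_2(2^{k+1}r_0)^2$, so each annulus contributes the constant $4C_1C_2$, and summing over the $O(\log(r/r_0))$ relevant scales gives an $O(\log r)$ bound. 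Putting everything together yields
$$\mathrm{Vol}(B(p,r))\leq A + B\log r$$
for constants $A,B>0$ independent of $r$.

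Finally, if $M$ were non-compact, Yau's linear volume growth theorem would give $\mathrm{Vol}(B(p,r))\geq c\, r$ for all sufficiently large $r$, which directly contradicts the above logarithmic upper bound. Hence $M$ must be compact. The main obstacle is the control of $\int \varphi_r R$: without the prescribed scalar curvature decay the argument cannot be closed, and the dyadic annular summation is the most delicate step. A secondary subtlety is that one needs the two-sided bound $|\Delta \varphi_r|\leq C/r^2$ rather than only the one-sided version stated in Lemma \ref{lem1}, which is required here because $f$ may change sign; this is in fact afforded by the Cheeger--Colding construction.
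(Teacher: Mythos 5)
Your argument is correct, but it follows a genuinely different route from the paper. The paper first proves an integral inequality for the potential function (Lemma \ref{lem2}), namely $\int_{B(p,r)}f \leq \frac{C}{r^2}\int_{B(p,2r)}f^2+\int_{B(p,r)}R$, then uses the boundedness of $f$, quadratic volume growth and the scalar curvature bound to conclude $\int_M f<\infty$, and finally contradicts the Greene--Wu theorem (Theorem \ref{G1}) that a non-negative subharmonic function on a complete non-compact manifold of non-negative sectional curvature has infinite integral; here the subharmonicity of $f$ comes from $R\leq \frac{n}{2}$ and non-negativity of $f$ from $R\geq 0$. You instead integrate the traced soliton equation $\Delta f=\frac{n}{2}-R$ directly against the cut-off, bound the error terms, and obtain the volume estimate $\mathrm{Vol}(B(p,r))\leq A+B\log r$, which you contradict with Yau's linear volume growth lower bound for complete non-compact manifolds with $Ric\geq 0$. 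Your route buys several things: it needs only non-negative Ricci curvature (the paper's non-negative sectional curvature is used to invoke Greene--Wu), it avoids the facts $R\leq\frac{n}{2}$ and $f\geq 0$ entirely, and your dyadic annulus treatment handles the decay hypothesis in its natural pointwise form $R(x)\leq C_1/d(x,p)^2$ for $d(x,p)\geq r_0$, whereas the paper's substitution $\int_{B(p,r)}R\leq\frac{C_1}{r^2}\mathrm{Vol}(B(p,r))$ implicitly reads the hypothesis as a bound on $\sup_{B(p,r)}R$ for every $r\geq r_0$, which is a much stronger (indeed nearly vacuous) condition. The price is that you import an external standard theorem (Yau's volume growth estimate) not quoted in the paper, and, as you correctly flag, you need the two-sided bound $|\Delta\varphi_r|\leq \frac{C}{r^2}$ from the Cheeger--Colding construction rather than the one-sided bound stated in Lemma \ref{lem1}; both are standard and unproblematic. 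The paper's route is shorter once Lemma \ref{lem2} and Greene--Wu are in hand, but yours is the more robust reading of the hypotheses.
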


To prove the Theorem $2.1$, we need the following results:

\begin{thm}\cite{GW74}\label{G1}
Let $M$ be a non-compact complete Riemannian manifold with non-negative sectional curvature. If $f\in C^\infty(M)$ is a non-negative subharmonic function, then
$$\int_M f=+\infty.$$
\end{thm}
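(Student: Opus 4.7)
The plan is to argue by contradiction: assume $\int_M f<+\infty$ and show this forces $f\equiv 0$, contradicting the (implicit) non-triviality of the subharmonic function in the hypothesis. The argument rests on two ingredients from comparison geometry.

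First, I would use a volume bound. Since $M$ has non-negative sectional curvature, in particular non-negative Ricci curvature, Bishop's comparison theorem gives
\[
\operatorname{vol}(B(p,r))\le \omega_n r^n,
\]
where $\omega_n r^n$ denotes the Euclidean volume of a ball of radius $r$. It is this one-sided comparison, rather than the Calabi--Yau linear volume lower bound, that will drive the argument.

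Second, I would establish a sub-mean-value inequality
\[
f(p)\le \frac{1}{\omega_n r^n}\int_{B(p,r)} f\,dV
\]
valid for every $p\in M$ and every $r>0$. On manifolds with $\operatorname{Ric}\ge 0$ this is obtained as follows: using the coarea formula, one writes the Euclidean-normalised ball mean as an integral of spherical means and differentiates in $r$. Thanks to the Laplacian comparison $\Delta d(\cdot,p)\le (n-1)/d(\cdot,p)$ and the subharmonicity $\Delta f\ge 0$, the resulting derivative has a definite sign, showing that $r\mapsto (\omega_n r^n)^{-1}\int_{B(p,r)} f$ is non-decreasing in $r$; its limit as $r\to 0^+$ recovers $f(p)$. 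Combining the two ingredients, if $I:=\int_M f<\infty$ then for any $p$ and any $r>0$,
\[
f(p)\le \frac{1}{\omega_n r^n}\int_{B(p,r)} f\le \frac{I}{\omega_n r^n}\longrightarrow 0\quad\text{as } r\to\infty,
\]
so $f(p)=0$ for every $p\in M$, i.e.\ $f\equiv 0$, a contradiction.

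The main obstacle is the sub-mean-value inequality. The Euclidean proof exploits rotational symmetry, which is unavailable on $M$; the required substitute is Jacobi field/Bishop comparison, and one must keep careful track of signs when differentiating the spherical mean so as to exploit $\Delta f\ge 0$ together with the upper bound on $\Delta d$. An alternative, if one wishes to avoid re-deriving the inequality from scratch, is to invoke directly the Li--Schoen mean-value inequality specialised to the $\operatorname{Ric}\ge 0$ setting, where the multiplicative constant is $1$.
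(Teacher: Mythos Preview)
The paper does not prove this theorem; it is quoted from Greene--Wu \cite{GW74} and used as a black box in the proof of Theorem~2.1, so there is no argument in the paper to compare your attempt against.

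Your proposal contains a genuine gap: the monotonicity of $r\mapsto (\omega_n r^n)^{-1}\int_{B(p,r)} f$ is false in general. Take $M=\mathbb{R}\times S^{n-1}$, which is complete, non-compact, with non-negative sectional curvature, and $f\equiv 1$ (harmonic, non-negative). Then the quantity in question equals $\operatorname{vol}(B(p,r))/(\omega_n r^n)\sim Cr^{1-n}\to 0$, so it \emph{decreases}, and your inequality $f(p)\le (\omega_n r^n)^{-1}\int_{B(p,r)} f$ fails for large $r$. The derivation breaks because differentiating the spherical integral in $r$ produces a divergence term $\int_{S(p,r)}\partial_\nu f\ge 0$ and a geometric term $\int_{S(p,r)} f\bigl(H-\tfrac{n-1}{r}\bigr)\le 0$ of opposite signs, and neither dominates. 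The same example shows why Bishop's \emph{upper} bound cannot help: the Li--Schoen inequality reads $f(p)\le C(n)\,\operatorname{vol}(B(p,r))^{-1}\int_{B(p,r)} f$ (and the constant is \emph{not} $1$ on a curved manifold), so replacing $\operatorname{vol}(B(p,r))$ by the larger $\omega_n r^n$ goes the wrong way. What actually makes your contradiction strategy work is precisely the ingredient you dismissed---the Calabi--Yau \emph{lower} bound $\operatorname{vol}(B(p,r))\ge cr$ for complete non-compact $M$ with $\operatorname{Ric}\ge 0$: combining it with Li--Schoen gives $f(p)\le C(n)\,I/\operatorname{vol}(B(p,r))\le C(n)\,I/(cr)\to 0$, forcing $f\equiv 0$ as desired.
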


\begin{lem}\label{lem2}
Let $(M,g)$ be a complete gradient shrinking Ricci soliton with potential function $f$. If the Ricci curvature is non-negative, then the potential function $f$ satisfies the following inequality:
\begin{equation}
\int_{B(p,r)}f \leq  \frac{C}{r^2}\int_{B(p,2r)}f^2+\frac{n}{2}Vol(B(p,r)),
\end{equation}
for all $p\in M$ and $r>0$, where $C>0$ is a constant.
\end{lem}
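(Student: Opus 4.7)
The plan is to exploit the standard shrinking-soliton identities together with the Cheeger--Colding cutoff of Lemma~\ref{lem1}. Taking the trace of the soliton equation $Ric+\nabla^2 f=\tfrac{1}{2}g$ (so $\lambda=\tfrac{1}{2}$) gives $\Delta f=\tfrac{n}{2}-R$, while \eqref{eq4} reads $R+|\nabla f|^2=\tfrac{f}{2}$. Since $Ric\geq 0$ forces $R\geq 0$, this yields $f=2R+2|\nabla f|^2\geq 0$, the pointwise bound $|\nabla f|^2\leq f/2$, and $\Delta f\leq n/2$.

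Next I would fix a cutoff $\varphi_r\in C^\infty_0(B(p,2r))$ of the type used in Lemma~\ref{lem1}, with $\varphi_r\equiv 1$ on $B(p,r)$ and $|\nabla\varphi_r|^2,|\Delta\varphi_r|\leq C/r^2$, so that $\int_{B(p,r)}f\leq \int_M\varphi_r^2 f$ (using $f\geq 0$). The term $\tfrac{n}{2}Vol(B(p,r))$ on the right originates from $\Delta f\leq \tfrac{n}{2}$: after multiplying by $\varphi_r^2$ and integrating, the resulting $\tfrac{n}{2}\int\varphi_r^2$ piece yields the volume contribution.

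The heart of the argument is to test the identity against $f\varphi_r^2$. Multiplying $f\Delta f=\tfrac{n}{2}f-fR$ by $\varphi_r^2$ and integrating by parts gives
\begin{equation*}
\tfrac{n}{2}\int\varphi_r^2 f-\int\varphi_r^2 fR \;=\; -2\int\varphi_r f\,\nabla\varphi_r\cdot\nabla f-\int\varphi_r^2|\nabla f|^2.
\end{equation*}
Replacing $|\nabla f|^2$ via $|\nabla f|^2=f/2-R$ and bounding $R\leq f/2$ converts $\int\varphi_r^2 fR$ into $\int\varphi_r^2 f^2$ (up to constants). Young's inequality applied to the cross term, using $|\nabla\varphi_r|^2\leq C/r^2$ and $|\nabla f|\leq \sqrt{f/2}$, then produces a term of the form $(C/r^2)\int_{B(p,2r)}f^2$ plus a residual $\int\varphi_r^2 f$ contribution to be absorbed on the left.

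The main obstacle is the delicate balancing of constants in the Young inequality: the coefficient of $\int\varphi_r^2 f$ generated on the right (from both the cross term and the substitution $R\leq f/2$) must be strictly smaller than the coefficient on the left, so that $\int\varphi_r^2 f$ can be isolated with the correct sign. A secondary bookkeeping issue is that the cutoff naturally produces $Vol(B(p,2r))$ rather than $Vol(B(p,r))$; this is handled by instead taking $\varphi_r$ supported in $B(p,r)$ and equal to $1$ on $B(p,r/2)$ and then rescaling $r\to 2r$, at the cost of enlarging the constant $C$.
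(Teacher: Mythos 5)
Your overall strategy (cutoff of Cheeger--Colding type, test the soliton identities, Cauchy--Schwarz/Young on the cross term) is in the right family, but the specific plan has a gap that no balancing of Young constants can repair. In your tested identity the term $\int\varphi_r^2 fR$ is handled by the pointwise bound $R\leq f/2$, which converts it into (a constant times) $\int_{B(p,2r)}\varphi_r^2 f^2$ \emph{without} the factor $1/r^2$. That term is not of the form allowed by the lemma and cannot be absorbed into the left-hand side: the whole content of the estimate is the decay factor $C/r^2$ in front of $\int f^2$ (it is exactly what makes Theorem 2.1 work under quadratic volume growth and bounded $f$; a bare $\int_{B(p,2r)}f^2$ grows like $r^2$ there). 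The paper avoids this trap by a different split: since $R\leq n/2$ for shrinkers (Cao--Zhou) one has $\Delta f\geq 0$ and $f\geq 0$, hence $0\leq\int\varphi_r^2 f\Delta f$, which after integration by parts and Cauchy--Schwarz gives the Caccioppoli bound $\int_{B(p,r)}|\nabla f|^2\leq \frac{C}{r^2}\int_{B(p,2r)}f^2$; then the $f$-integral is recovered \emph{pointwise} from $f=R+|\nabla f|^2$ on $B(p,r)$ and $R\leq n/2$, giving exactly $\frac{n}{2}\mathrm{Vol}(B(p,r))$. In your plan the quantity $\int\varphi_r^2 f$ is instead extracted from the $\frac{n}{2}\int\varphi_r^2 f$ term of the tested identity, and that is what forces you into the problematic treatment of $\int\varphi_r^2 fR$; note also that you never invoke $R\leq n/2$, which is the ingredient actually needed (your stated source of the volume term, ``$\frac{n}{2}\int\varphi_r^2$,'' does not even occur in the identity you display, since every term there carries a factor of $f$).

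Your proposed repair of the bookkeeping issue also fails as stated: taking $\varphi_r$ supported in $B(p,r)$, equal to $1$ on $B(p,r/2)$, and then replacing $r$ by $2r$ just reproduces a volume term over the doubled ball $B(p,2r)$, so you never reach $\frac{n}{2}\mathrm{Vol}(B(p,r))$ this way. In the paper's route this issue does not arise because the volume term comes from $\int_{B(p,r)}R\leq\frac{n}{2}\mathrm{Vol}(B(p,r))$ with no cutoff involved. Finally, a small point: you use the normalization $R+|\nabla f|^2=f/2$ (following the paper's equation (3), which is off by a factor $2$), whereas the proof of the lemma uses $R+|\nabla f|^2=f$; this only affects constants, but with your normalization even the corrected argument yields $n\,\mathrm{Vol}(B(p,r))$ rather than $\frac{n}{2}\mathrm{Vol}(B(p,r))$.
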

\begin{proof}
From the gradient shrinking Ricci soliton, we get
\begin{equation}\label{s2}
R+|\nabla f|^2=f,
\end{equation}
and then by taking trace, we get
\begin{equation}\label{s1}
R+\Delta f=\frac{n}{2}.
\end{equation}
Now, for the gradient shrinking Ricci soliton, the scalar curvature satisfies the inequality \cite{CZ10} $$R\leq \frac{n}{2}.$$
It follows from (\ref{s1}), that $\Delta f\geq 0$, i.e., $f$ is subharmonic. Again $R\geq 0$ for gradient shrinking Ricci soliton \cite{CZ10} implies that $f\geq 0$, for some constant $K>0$.
Now we take the same cut-off function $\varphi_r$ defined in Lemma \ref{lem1}. Then we calculate
\begin{equation*}
0\leq  \int_{B(p,2r)}\varphi_r^2f\Delta f=-\int_{B(p,2r)}\varphi_r^2|\nabla f|^2-2\int_{B(p,2r)}\varphi_rfg\Big(\nabla\varphi_r,\nabla f\Big).
\end{equation*}
Then we have
\begin{eqnarray*}
&&\int_{B(p,2r)}\varphi_r^2|\nabla f|^2 \leq  -2\int_{B(p,2r)}\varphi_rfg\Big(\nabla\varphi_r,\nabla f\Big)\\
&&\leq 2\Big(\int_{B(p,2r)}\varphi_r^2|\nabla f|^2 \Big)^{1/2}\Big(\int_{B(p,2r)}f^2|\nabla \varphi_r|^2 \Big)^{1/2}.
\end{eqnarray*}
Therefore, we obtain
\begin{eqnarray}\label{in1}
\nonumber\int_{B(p,r)}|\nabla f|^2 &\leq & \int_{B(p,2r)}\varphi_r^2|\nabla f|^2 \leq 4\int_{B(p,2r)}f^2|\nabla \varphi_r|^2\\
&\leq & \frac{C}{r^2}\int_{B(p,2r)}f^2.
\end{eqnarray}
Now using (\ref{s2}), we obtain
\begin{equation}\label{in2}
\int_{B(p,2r)}\Big(f-R\Big)\leq \frac{C}{r^2}\int_{B(p,2r)}f^2,
\end{equation}
which implies that
\begin{eqnarray}
\label{s3}\int_{B(p,r)}f &\leq & \frac{C}{r^2}\int_{B(p,2r)}f^2+\int_{B(p,r)}R\\
&\leq & \frac{C}{r^2}\int_{B(p,2r)}f^2+\frac{n}{2}Vol(B(p,r)).
\end{eqnarray}
\end{proof}

\begin{proof}[\textbf{Proof of Theorem 2.1.}]
By placing $R\leq \frac{C_1}{r^2}$ in (\ref{s3}), the inequality (\ref{s3}) reduces to the form
\begin{equation}
\int_{B(p,r)}f \leq  \frac{C}{r^2}\int_{B(p,2r)}f^2+\int_{B(p,r)}R,
\end{equation}
for $r\geq r_0$. Now the manifold $M$ has quadratic volume growth, i.e., $Vol(B(p,r))\leq Kr^2$, for some constant $K>0$ and also there is a constant $C'>0$ such that $f^2\leq C'$. Therefore, the above inequality implies that
\begin{eqnarray*}
\int_{B(p,r)}f &\leq & \frac{C'C}{r^2}Vol(B(p,2r))+\frac{C_1}{r^2}Vol(B(p,r))\\
&\leq & K',
\end{eqnarray*}
for all $r\geq r_0$ and for some constant $K'>0$. Taking limit as $r\rightarrow\infty$ we obtain
$$\int_M f\leq K'.$$
Therefore, from Theorem \ref{G1} we see that $M$ must be compact.
\end{proof}
\begin{cor}
Let $(M,g)$ be a gradient shrinking Ricci soliton with bounded potential function $f$. If $M$ has linear volume growth, then $M$ is an Einstein manifold.
\end{cor}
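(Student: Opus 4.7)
The strategy is to show that $f$ must be constant, whence the shrinking soliton equation collapses to $Ric = \tfrac{1}{2}g$ and $M$ is Einstein. The key tool will be the Caccioppoli-type estimate
\begin{equation*}
\int_{B(p,r)} |\nabla f|^{2} \leq \frac{C}{r^{2}}\int_{B(p,2r)} f^{2},
\end{equation*}
derived as (\ref{in1}) inside the proof of Lemma \ref{lem2}. Before invoking it I would remark that, although Lemma \ref{lem2} is stated with a non-negative Ricci curvature hypothesis, the derivation of (\ref{in1}) actually uses only $f \geq 0$ and $\Delta f \geq 0$, both of which follow for any gradient shrinking soliton from the general bounds $0 \leq R \leq n/2$ together with (\ref{s1}) and (\ref{s2}); moreover we may freely add a constant to $f$ (preserving boundedness) so that $f \geq 0$ holds globally.

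With (\ref{in1}) at our disposal, I would plug in the boundedness $|f| \leq M$ and the linear volume growth $\mathrm{Vol}(B(p,r)) \leq Kr$ to obtain
\begin{equation*}
\int_{B(p,r)} |\nabla f|^{2} \leq \frac{CM^{2}}{r^{2}}\,\mathrm{Vol}(B(p,2r)) \leq \frac{2CKM^{2}}{r} \longrightarrow 0 \quad \text{as } r \to \infty.
\end{equation*}
Because the map $r \mapsto \int_{B(p,r)} |\nabla f|^{2}$ is non-decreasing, this decay of the right-hand side to zero forces $\int_{B(p,r_{0})} |\nabla f|^{2} = 0$ for every fixed $r_{0} > 0$. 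Consequently $\nabla f \equiv 0$ on all of $M$, so $f$ is constant and $\nabla^{2} f \equiv 0$. The shrinking soliton equation (\ref{r1}) with $\lambda = \tfrac{1}{2}$ then reduces to $Ric = \tfrac{1}{2} g$, proving that $M$ is Einstein.

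The main obstacle I foresee is justifying that the Caccioppoli estimate (\ref{in1}) is valid in the present setting without the pointwise Ricci curvature hypothesis used in Lemma \ref{lem2}. This amounts to tracking which sign properties of $f$ and $\Delta f$ are genuinely consumed in the cut-off integration by parts, and noting that the structural inequalities $R \geq 0$ and $R \leq n/2$ available for every shrinking Ricci soliton already supply them. Once that point is cleared, the rest of the argument is a straightforward monotonicity observation together with an immediate reading of the soliton equation.
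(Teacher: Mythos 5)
Your proposal is correct and follows essentially the same route as the paper's own proof: both use the cut-off (Caccioppoli) estimate (\ref{in1}), the bound on $f$, and linear volume growth to conclude $\int_M |\nabla f|^2 = 0$, hence $f$ is constant and the soliton equation gives $Ric=\tfrac{1}{2}g$. Your additional observation that (\ref{in1}) only consumes $f\geq 0$ and $\Delta f\geq 0$ (obtained from $0\leq R\leq \tfrac{n}{2}$ as in the proof of Lemma \ref{lem2}), rather than the non-negative Ricci hypothesis of that lemma, is precisely the reading on which the paper's corollary implicitly relies, so no new gap is introduced.
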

\begin{proof}
Since scalar curvature vanishes in $M$, from (\ref{s3}), we get
\begin{equation*}
\int_{B(p,r)}|\nabla f|^2\leq \frac{C}{r^2}\int_{B(p,2r)}f^2.
\end{equation*}
Since $f \leq k$ for some $k>0$, it follows that
\begin{equation*}
\int_{B(p,r)}|\nabla f|^2 \leq \frac{Ck^2}{r^2}Vol(B(p,2r)).
\end{equation*}
Using the linear volume growth, i.e., $Vol(B(p,r))\leq c'r$, for some constant $c'>0$, we have
\begin{equation*}
\int_{B(p,r)}|\nabla f|^2 \leq \frac{Ck^2}{r^2}(2rc')=\frac{2Cc'k^2}{r}.
\end{equation*}
Taking limit $r\rightarrow\infty$, we obtain
\begin{equation*}
\int_{M}|\nabla f|^2 =0,
\end{equation*}
which follows that the potential function $f$ is constant. Consequently $M$ is an Einstein manifold.  
\end{proof}
\begin{cor}
Suppose $(M,g)$ is a gradient shrinking Ricci soliton with bounded potential function $f$. If $M$ has quadratic volume growth and the scalar curvature vanishes in $M$, then $f$ has finite integral in $M$.
\end{cor}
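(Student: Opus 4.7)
The plan is to apply directly the integral estimate established in the proof of Lemma~\ref{lem2}. Recall from line (\ref{s3}) that for any shrinking gradient Ricci soliton (whose scalar curvature is automatically non-negative by \cite{CZ10}), and for all $p\in M$, $r>0$, one has
\[
\int_{B(p,r)} f \leq \frac{C}{r^2}\int_{B(p,2r)} f^2 + \int_{B(p,r)} R.
\]
First, I would invoke the hypothesis $R\equiv 0$ to drop the second term on the right-hand side. The soliton identity $R+|\nabla f|^2=f$ then forces $f=|\nabla f|^2\geq 0$, so $f$ is non-negative, which allows $\int_{B(p,r)}f$ to be treated as a monotone quantity in $r$.

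Next, I would use the boundedness of $f$: if $0\leq f\leq k$, then $f^2\leq k^2$, and the previous estimate becomes
\[
\int_{B(p,r)} f \leq \frac{Ck^2}{r^2}\,Vol\bigl(B(p,2r)\bigr).
\]
Plugging in the quadratic volume growth assumption $Vol(B(p,r))\leq Kr^2$, the right-hand side is bounded by $4CKk^2$, a constant independent of $r$. Letting $r\to\infty$ (monotone convergence applies since $f\geq 0$) yields $\int_M f\leq 4CKk^2<\infty$, as required.

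I do not foresee any substantive obstacle: the result follows by feeding the three hypotheses (bounded $f$, quadratic volume growth, vanishing scalar curvature) into the cut-off-function estimate already proved in Lemma~\ref{lem2}. The only minor point worth verifying is that Lemma~\ref{lem2}'s argument goes through under our hypotheses; it is stated assuming non-negative Ricci curvature, but inspection of the proof shows only $R\geq 0$ is genuinely used, and this holds automatically for shrinking solitons by \cite{CZ10}.
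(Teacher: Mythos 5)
Your proposal is correct and follows essentially the same route as the paper, which simply derives the corollary from the estimate (\ref{s3}) by setting $R=0$, bounding $f^2$ by a constant, and using quadratic volume growth to get a uniform bound before letting $r\to\infty$. Your added remark that Lemma \ref{lem2} only really needs $R\geq 0$ (and $R\leq \frac{n}{2}$), both automatic for shrinking solitons, is a sensible clarification but does not change the argument.
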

\begin{proof}
The proof easily follows from (\ref{s3}) and hence we omit.
\end{proof}
\begin{prop}
The potential function of a gradient shrinking $($resp., expanding$)$ Ricci soliton can not be strictly superharmonic $($resp., subharmonic$)$.
\end{prop}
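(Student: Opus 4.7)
The plan is to reduce both statements to a direct consequence of the trace of the soliton equation (\ref{r1}) combined with the scalar curvature bound known in each case. Taking the trace of $Ric + \nabla^2 f = \lambda g$ yields $R + \Delta f = n\lambda$, that is, $\Delta f = n\lambda - R$; once $R$ is controlled on the appropriate side, the sign of $\Delta f$ is forced on the opposite side.

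For the shrinking case ($\lambda = 1/2$), the paper has already committed, in Lemma 2.2 citing \cite{CZ10}, to the estimate $R \leq n/2$. Substituting this into $\Delta f = n/2 - R$ gives $\Delta f \geq 0$ on all of $M$, so $f$ is subharmonic and cannot satisfy $\Delta f < 0$ everywhere. Hence $f$ is not strictly superharmonic.

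For the expanding case ($\lambda = -1/2$), I would establish the analogous lower bound $R \geq -n/2$. Its standard derivation applies a maximum principle to the Ricci-soliton identity $\Delta R = \langle \nabla f, \nabla R \rangle + 2\lambda R - 2|Ric|^2$, together with the algebraic inequality $|Ric|^2 \geq R^2/n$: at an infimum of $R$ one gets $R(R - n\lambda) \leq 0$, which yields $R \geq n\lambda = -n/2$. Substituting into $\Delta f = -n/2 - R$ gives $\Delta f \leq 0$, so $f$ is superharmonic and cannot be strictly subharmonic.

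The main obstacle lies in legitimizing these curvature bounds on a complete, possibly non-compact manifold. The shrinking half is already delegated to \cite{CZ10}, while the expanding half requires genuine work because $R$ need not attain its infimum. The cleanest workaround is either a generalized (Omori--Yau type) maximum principle, or a cut-off argument with a function $\varphi_r$ of the type already deployed in Lemmas 1.2 and 2.2 of the paper, applied to the drift Laplacian $\Delta - \langle \nabla f, \nabla \cdot \rangle$ in order to justify the pointwise evaluation at the infimum of $R$.
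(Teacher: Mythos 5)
Your proposal is correct and follows essentially the same route as the paper: trace the soliton equation to get $R+\Delta f=n\lambda$ and invoke the scalar curvature bounds $R\leq \frac{n}{2}$ (shrinking) and $R\geq -\frac{n}{2}$ (expanding) to force the sign of $\Delta f$. The only difference is that you sketch a justification of the expanding lower bound on a non-compact manifold, whereas the paper simply asserts it (as it also does in Theorem 3.1), so your added care is welcome but not a departure in method.
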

\begin{proof}
 Now from fundamental equation of Ricci soliton we have
\begin{equation*}
Ric+\nabla^2f=\frac{1}{2} g.
\end{equation*}
Taking trace of the above equation, we get
\begin{equation}\label{P11}
R+\Delta f=n \frac{1}{2}.
\end{equation} 
Let us consider the potential function $f$ as superharmonic, i.e., $\Delta f\leq 0$. Hence from (\ref{P11}) we can write
\begin{equation*}
R\geq \frac{n}{2},
\end{equation*}
which contradicts that $R\leq \frac{n}{2}$ for gradient shrinking Ricci solitons. So $f$ can not be superharmonic.

Similarly we can proof the case of expanding gradient Ricci soliton.
\end{proof}
\section{Expanding Ricci soliton and Einstein manifold}
\begin{thm}
Let $(M, g)$ be an n-dimensional complete gradient expanding Ricci soliton with the potential function $f\geq k $ for some constant $k>0$ and $M$ be of linear volume growth. Then M is an Einstein manifold.
\end{thm}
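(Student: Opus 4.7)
The plan is to follow the template of the preceding corollary on bounded-$f$ shrinking solitons, adapted to the expanding regime. For a gradient expanding Ricci soliton ($\lambda = -\tfrac{1}{2}$), tracing the soliton equation gives $\Delta f + R = -\tfrac{n}{2}$, and after a constant shift of $f$ the first integral reads $R + |\nabla f|^2 = -f$. Combining these identities produces $|\nabla f|^2 = -R - f$ and $\Delta f = f + |\nabla f|^2 - \tfrac{n}{2}$; in particular, the hypothesis $f \geq k > 0$ forces $R \leq -k < 0$ throughout $M$.

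First, I would establish an expanding analogue of the Caccioppoli-type estimate (\ref{in1}) from Lemma \ref{lem2}. Take the cutoff $\varphi_r \in C_0^\infty(B(p,2r))$ from Lemma \ref{lem1}, integrate $\int \varphi_r^2 f\, \Delta f$ by parts, equate with the expression obtained by substituting $\Delta f = -R - n/2$, and apply Cauchy--Schwarz to the cross term $\int \varphi_r f\, g(\nabla \varphi_r, \nabla f)$. After substituting $R = -f - |\nabla f|^2$ on the right, the expected outcome is an inequality of the form
\begin{equation*}
\int_{B(p,r)} |\nabla f|^2 \;\leq\; \frac{C}{r^2}\int_{B(p,2r)} f^2,
\end{equation*}
structurally parallel to (\ref{in1}).

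Second, I would insert the linear volume growth hypothesis $Vol(B(p,r)) \leq c' r$. If the integrand $f^2$ is effectively bounded on balls (as in the preceding shrinking corollary, where $f$ is bounded above), then the right-hand side is $O(1/r)$, and letting $r \to \infty$ gives $\int_M |\nabla f|^2 = 0$. Consequently $\nabla f \equiv 0$, so $\nabla^2 f \equiv 0$, and the soliton equation $Ric + \nabla^2 f = -\tfrac{1}{2} g$ collapses to $Ric = -\tfrac{1}{2} g$, i.e., $M$ is Einstein.

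The main obstacle is that the hypothesis $f \geq k$ gives only a lower bound on $f$, whereas the analogous shrinking statement relies on an upper bound. Closing the Caccioppoli inequality therefore requires either a preliminary argument producing an upper bound for $f$ on $M$ (for instance, extracted from $\Delta f = f + |\nabla f|^2 - n/2$ by integrating against the cutoff and invoking the divergence theorem together with linear volume growth, so as to preclude super-linear growth of $f$), or a bootstrap that absorbs the $f^2$-integral on the right-hand side back into the left via $f = -R - |\nabla f|^2$. Executing this absorption cleanly is the technical heart of the argument; once it is in place, the remainder of the proof follows essentially verbatim from the shrinking corollary.
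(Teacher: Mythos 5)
Your proposal stalls at exactly the point you flag, and in fact the difficulty appears one step earlier than you acknowledge. For an expanding soliton the traced equation together with $R\geq -\frac{n}{2}$ gives $\Delta f\leq 0$, so the potential is \emph{superharmonic}, not subharmonic. The Caccioppoli-type estimate (\ref{in1}) is derived in Lemma \ref{lem2} from the starting point $0\leq \int \varphi_r^2 f\,\Delta f$, which needs $f\geq 0$ \emph{and} $\Delta f\geq 0$; for a positive superharmonic $f$ the term $-\int \varphi_r^2 f\,\Delta f$ is nonnegative and cannot be discarded, so the inequality $\int_{B(p,r)}|\nabla f|^2\leq \frac{C}{r^2}\int_{B(p,2r)}f^2$ does not follow from the computation you sketch. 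Even if it did, closing the argument requires an upper bound on $f$, which is not among the hypotheses (only $f\geq k$ is assumed) and which you do not produce: the two repair strategies you mention (a preliminary upper bound extracted from $\Delta f = f+|\nabla f|^2-\frac{n}{2}$, or an absorption of the $f^2$-term) are named but not carried out, and there is no reason to expect a global upper bound for the potential of an expanding soliton. So the decisive step of the proof is missing.

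The paper closes this gap with a single device absent from your sketch: it runs the argument for $1/f$ instead of $f$. Since $f\geq k>0$ and $\Delta f\leq 0$, one computes $\Delta\bigl(\frac{1}{f}\bigr)=\frac{2}{f^3}|\nabla f|^2-\frac{\Delta f}{f^2}\geq 0$, so $1/f$ is a \emph{nonnegative subharmonic} function, and the lower bound $f\geq k$ becomes precisely the upper bound $\frac{1}{f}\leq\frac{1}{k}$ that the Caccioppoli scheme needs. Applying (\ref{in1}) to $1/f$ and using linear volume growth yields $\int_{B(p,r)}\bigl|\nabla\frac{1}{f}\bigr|^2\leq \frac{C}{r^2k^2}\,Vol(B(p,2r))\leq \frac{2CC_1}{rk^2}\rightarrow 0$, hence $1/f$ and therefore $f$ is constant, and the soliton equation gives $Ric=-\frac{1}{2}g$. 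In short, the reciprocal simultaneously fixes the sign of the Laplacian and converts the hypothesis $f\geq k$ into the required boundedness; without this (or an equivalent substitute) your outline does not constitute a proof.
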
 
\begin{proof}
Since $M$ is gradient expanding Ricci soliton, the scalar curvature $R\geq -\frac{n}{2}$. Hence, from equation (\ref{eq4}) we get  $\Delta f $ $\leq 0$, i.e, $M$ is superharmonic. Now
\begin{equation}
 \Big(\frac{1}{f}\Big)_i = -\Big(\frac{1}{f^2}\Big)f_i
\end{equation}

and
\begin{equation}
 \Big(\frac{1}{f}\Big)_{ii} = \Big(\frac{2}{f^3}\Big)f^2_i -\Big(\frac{1}{f^2}\Big)f_{ii}.
\end{equation}
Therefore
\begin{equation}
\Delta \Big(\frac{1}{f}\Big)=\Big(\frac{2}{f^3}\Big) |\nabla f|^2 -\frac{\Delta f}{f^2}.
\end{equation}
Since $\Delta f\leq 0$, it implies that $\Delta (\frac{1}{f})\geq 0$. Hence $\frac{1}{f}$ is subharmonic. Again since $f$ is positive, $\frac{1}{f}$ is also positive. The manifold $M$ has linear volume growth, $Vol(B(p,r))\leq C_1r$ for some constant $C_1>0$.
Now for non-negative subharmonic function we obtain from the equation (\ref{in1}) that
\begin{eqnarray*}
\int_{B(p,r)}|\nabla \frac{1}{f}|^2&&\leq \Big(\frac{C}{r^2}\Big)\int_{B(p,2r)}\Big(\frac{1}{f^2}\Big)\\
 && \leq \Big(\frac{C}{r^2k^2}\Big) Vol(B(p,2r)) \\
 &&\leq \Big(\frac{C}{r^2k^2}\Big)C_1 2r\\
&&\leq \frac{2CC_1}{rk^2}.  
\end{eqnarray*}
Taking limit as $r\rightarrow \infty$, we have 
\begin{equation}
 \int_{M}|\nabla \frac{1}{f}|^2=0,
\end{equation}
which follows that the function $\frac{1}{f}$ is constant. Consequently the potential function $f$ is constant. It follows from (2) that M is an Einstein manifold.
\end{proof}


\section{acknowledgment}
 The second author gratefully acknowledges to the
 CSIR(File No.:09/025(0282)/2019-EMR-I), Govt. of India for financial assistance. The forth author greatly acknowledges to The University Grants Commission, Government of India for the award of Junior Research Fellow.

\end{document}